\def\bkC{{\rm \kern.24em \vrule width.05em height1.4ex depth-.05ex \kern-.26em C}}
\def\C{\bkC}
\def\bksC{{\rm \kern.24em \vrule width.05em height1ex depth-.05ex \kern-.26em C}}
\def\bkH{{\rm I\kern-.22em H}}
\def\H{\bkH}                    
\def\bkR{{\rm I\kern-.17em R}}
\def\RR{\bkR}
\def\bkZ{{\rm Z\kern-.32em Z}}
\def\Z{\bkZ}
\def\bksZ{{\rm Z\kern-.22em Z}}
\def\SL{SL_2(\C)}
\def\PSL{PSL_2(\C)}
\def\Ei{\mathfrak{E}}
\def\X{\widetilde{\mathfrak{X}}}
\def\PX{\mathfrak{X}}
\def\longitude{\mathcal{L}}
\def\m{\mathcal{M}}
\DeclareMathOperator{\qe}{q}
\DeclareMathOperator{\de}{d}
\DeclareMathOperator{\Vol}{Vol}
\DeclareMathOperator{\vol}{vol}
\newtheorem{thm}{Theorem}
\newtheorem{lem}[thm]{Lemma}
\newtheorem{cor}[thm]{Corollary}
\newtheorem{pro}[thm]{Proposition}
\begin{document}
\title{A birationality result for character varieties} 
\authors{Ben Klaff and Stephan Tillmann}

\begin{abstract}
Let $M$ be an orientable, cusped hyperbolic 3--manifold of finite volume. We show that the restriction map $r\co \PX_0 \to \PX(\partial M)$ from a Dehn surgery component in the $\PSL$--character variety of $M$ to the character variety of the boundary of $M$ is a birational isomorphism onto its image. This generalises a result by Nathan Dunfield. A key step in our proof is the exactness of Craig Hodgson's volume differential on the eigenvalue variety.
\end{abstract}
\primaryclass{57M27} \keywords{3--manifold, character variety, Hodgson's volume differential} 

\maketitle


\section{Introduction}

Let $M$ be an orientable cusped complete hyperbolic 3--manifold of finite volume. There is a discrete and faithful representation of $\pi_1(M)$ into $\PSL$, and its character $\chi_0$ is known to be a smooth point of the $\PSL$--character variety $\PX(M).$ The irreducible component $\PX_0$ of $\PX(M)$ containing $\chi_0$ is called a \emph{Dehn surgery component} of $\PX(M)$. (There is also a Dehn surgery component, possibly the same, containing the complex conjugate character $\overline{\chi}_0.$)
The inclusion map $\partial M \to M$ induces a restriction map $$r : \PX(M) \to\PX(\partial M).$$ It is shown in \cite{tillus_ei} that the restriction of $r$ to $\PX_0$ is finite--to--one onto its image. In the case where $M$ has only one cusp, Dunfield has shown in \cite{du2} that $r\co \PX_0 \to \PX(\partial M)$ has degree one onto its image using Thurston's Hyperbolic Dehn Surgery Theorem and a Volume Rigidity Theorem attributed to Gromov, Thurston and Goldman. This note generalises Dunfield's result to manifolds with an arbitrary number of cusps: 

\begin{thm}\label{thm:birational}
Let $M$ be an orientable, non--compact, complete hyperbolic 3--manifold of finite volume. Let $\PX_0$ be a Dehn surgery component in the $\PSL$--character variety of $M$. Then the restriction map $r : \PX_0 \to \PX(\partial M)$ is a birational isomorphism onto its image.
\end{thm}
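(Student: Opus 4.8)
The plan is to reduce the theorem to the single assertion that $r$ restricted to $\PX_0$ has degree one onto its image, and then to establish this degree statement by a volume argument driven by the exactness of Hodgson's differential. Since $\PX_0$ is irreducible, so is its image $r(\PX_0)$, and by \cite{tillus_ei} the induced map $r : \PX_0 \to r(\PX_0)$ is finite, hence dominant and generically finite of some degree $d$. Over a field of characteristic zero a dominant generically finite morphism of irreducible varieties is a birational isomorphism onto its image exactly when $d = 1$, so it suffices to prove that the generic fibre of $r$ consists of a single point.

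I would fix the discrete faithful character $\chi_0$ and set $\xi_0 = r(\chi_0)$. Two classical facts anchor the argument at $\chi_0$. First, Thurston's Hyperbolic Dehn Surgery Theorem shows that $\chi_0$ is a smooth point of $\PX_0$ and that $r$ is a local biholomorphism from a neighbourhood of $\chi_0$ onto its image; in particular $\chi_0$ is an unramified preimage of $\xi_0$. Second, the Gromov--Thurston--Goldman volume rigidity theorem gives a volume function $\Vol$ on $\PX_0$ with $\Vol \le \vol(M)$ everywhere and with equality precisely at the discrete faithful characters $\chi_0$ and $\overline{\chi}_0$; note that $\overline{\chi}_0$ restricts to the complex conjugate boundary character, which differs from $\xi_0$.

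The heart of the proof is to show that $\Vol$ is constant on the fibres of $r$. Hodgson's volume differential is most naturally defined on the eigenvalue variety $\PEi$ of \cite{tillus_ei}, which carries the meridian and longitude eigenvalues of each cusp as regular functions and admits finite maps to $\PX_0$ and to the boundary eigenvalue variety compatible with $r$; I would run the analysis there and transfer the conclusion. Hodgson's formula expresses $d\Vol$ as the pullback $r^{*}\omega$ of a $1$--form $\omega$ built entirely from the logarithmic differentials of the peripheral eigenvalues. Let $B \subseteq r(\PX_0)$ be the dense open locus over which $r$ is an unramified $d$--sheeted cover; the preimage $r^{-1}(B)$ is a nonempty open subset of the irreducible variety $\PX_0$, hence connected, so the monodromy of the cover acts transitively on the $d$ sheets. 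Given two preimages $\chi_i, \chi_j$ of a point of $B$, choose a loop $\gamma$ in $B$ whose monodromy carries the $i$--th sheet to the $j$--th and lift it to a path $\widetilde{\gamma}$ in $r^{-1}(B)$ from $\chi_i$ to $\chi_j$; since $\Vol$ is single valued on $r^{-1}(B)$ and $d\Vol = r^{*}\omega$, one computes $\Vol(\chi_j) - \Vol(\chi_i) = \int_{\widetilde{\gamma}} d\Vol = \oint_{\gamma} \omega$. Exactness of $\omega$ on the eigenvalue variety forces every such period to vanish, so $\Vol$ takes a common value on each fibre over $B$. Establishing this exactness is the main obstacle and the technical core of the paper: the individual logarithmic terms of $\omega$ are multivalued, and one must show that their prescribed combination is a globally exact real $1$--form with $\Vol$ as single-valued potential.

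Granting exactness, the proof concludes quickly. Because $r$ is finite, hence proper, every point of $r^{-1}(\xi_0)$ is a limit of preimages of points $\xi_n \to \xi_0$ with $\xi_n \in B$; by continuity of $\Vol$ and the constancy established on each fibre over $B$, all points of $r^{-1}(\xi_0)$ share the volume $\Vol(\chi_0) = \vol(M)$. Volume rigidity then forces each of them to be discrete and faithful, hence equal to $\chi_0$ or $\overline{\chi}_0$; as $\overline{\chi}_0$ does not restrict to $\xi_0$, we obtain $r^{-1}(\xi_0) = \{\chi_0\}$. Since $\chi_0$ is unramified, $\xi_0$ is a regular value of $r$ with exactly one preimage, whence $d = \# r^{-1}(\xi_0) = 1$. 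This gives that $r : \PX_0 \to r(\PX_0)$ is birational, completing the proof.
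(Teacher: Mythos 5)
Your overall architecture (reduce to degree one, use exactness of Hodgson's form to make $\Vol$ constant on generic fibres, then invoke volume rigidity) matches the paper, but the endgame contains a fatal error: you anchor the argument at $\xi_0 = r(\chi_0)$, and your claim that ``$\overline{\chi}_0$ restricts to the complex conjugate boundary character, which differs from $\xi_0$'' is false. At the complete structure every peripheral element is parabolic, so every peripheral trace equals $\pm 2$ and is in particular real; hence $r(\overline{\chi}_0)=\overline{r(\chi_0)}=r(\chi_0)=\xi_0$. The paper points this out explicitly, precisely to warn that the volume does \emph{not} factor through $r$ at $\chi_0$: one has $r(\chi_0)=r(\overline{\chi}_0)$ but $\Vol_M(\overline{\chi}_0)=-\Vol_M(\chi_0)$. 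Concretely, $\chi_0$ lies in the exceptional subvariety $V=\X_0\cap\bigcup_i\{\chi(\m_i)^2=\chi(\longitude_i)^2=4\}$ outside of which the factoring statement holds, so $\xi_0$ is exactly the kind of point your argument must avoid. Your limiting step then breaks: when $\overline{\chi}_0\in\PX_0$ (which the paper allows and which does occur), sequences $\xi_n\to\xi_0$ approaching along different branches of the image have fibres converging to $\chi_0$ and to $\overline{\chi}_0$ with limiting volumes $+\Vol(M)$ and $-\Vol(M)$ respectively, so the conclusion that all points of $r^{-1}(\xi_0)$ have volume $\Vol(M)$ is false. Relatedly, the final count $d=\#r^{-1}(\xi_0)$ is unjustified: one unramified preimage does not make $\xi_0$ a generic value, and the image may fail to be normal at $\xi_0$ (a ``node'' identifying $\chi_0$ with $\overline{\chi}_0$ is compatible with $d=1$ and $\#r^{-1}(\xi_0)=2$).

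The paper's proof avoids all of this by anchoring not at $\chi_0$ but at a Zariski dense set of characters $\chi_\kappa$ that are holonomies of \emph{closed} hyperbolic Dehn fillings $N$ of $M$ (Section 5; the density is proved by induction on the number of cusps using Thurston's Dehn surgery theorem and volume considerations). These points lie outside the bad subvariety, so Proposition 4 applies and any second preimage $\chi'$ of $r(\chi_\kappa)$ satisfies $\Vol_M(\chi')=\Vol_M(\chi_\kappa)=\pm\Vol(N)$; the boundary data forces $\chi'$ to kill the filling curves and hence factor through $\pi_1(N)$, so the \emph{closed-manifold} rigidity theorem gives $\chi'=\chi_\kappa$ or $\chi'=\overline{\chi}_\kappa$, and the second case is excluded because it would force $\Vol(N)=0$. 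Note that this last exclusion uses precisely the volume-constancy on the fibre, which is available at $\chi_\kappa$ but not at $\chi_0$. If you want to salvage your approach you must move your base point off the locus where all peripheral traces are $\pm 2$, and at that stage you essentially need the paper's Dehn filling construction (or some other explicit dense set of good points) anyway.
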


\begin{cor}\label{cor:birational}
Let $M$ be an orientable, non--compact, complete hyperbolic 3--manifold of finite volume with $h$ cusps. Let $\X_0$ be a Dehn surgery component in the $\SL$--character variety of $M$. Then the restriction map $r : \X_0 \to \X(\partial M)$ has degree at most $2^{-h}|H^1(M, \Z_2)|$ onto its image. In particular, if $H^1(M, \Z_2) \cong \Z_2^h,$ then the map is a birational isomorphism.
\end{cor}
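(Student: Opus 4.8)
The plan is to deduce the $\SL$--statement from Theorem~\ref{thm:birational} by comparing the $\SL$-- and $\PSL$--character varieties through the central isogeny $\SL\to\PSL$. Write $q_M\co\X(M)\to\PX(M)$ and $q_\partial\co\X(\partial M)\to\PX(\partial M)$ for the maps induced by $\SL\to\PSL$, and let $\overline{r}$ denote the $\PSL$--restriction map of Theorem~\ref{thm:birational}, so that the square
\[
\begin{CD}
\X_0 @>{r}>> \X(\partial M)\\
@V{q_M}VV @VV{q_\partial}V\\
\PX_0 @>{\overline{r}}>> \PX(\partial M)
\end{CD}
\]
commutes, where $\X_0$ is a component of $\X(M)$ lying over $\PX_0$. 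First I would record that the finite abelian group $G=H^1(M,\Z_2)=\Hom(\pi_1(M),\{\pm 1\})$ acts on $\X(M)$ by twisting, $\epsilon\cdot\chi_\rho=\chi_{\epsilon\rho}$, that the analogous group $H^1(\partial M,\Z_2)$ acts on $\X(\partial M)$, and that restriction of cocycles gives a homomorphism $\iota^*\co H^1(M,\Z_2)\to H^1(\partial M,\Z_2)$ intertwining the two actions: $r(\epsilon\cdot\chi)=\iota^*(\epsilon)\cdot r(\chi)$. Since the holonomy of $M$ lifts to $\SL$, the map $q_M$ carries $\X_0$ onto $\PX_0$ as a finite surjection, and two $\SL$--characters with the same image under $q_M$ differ, at irreducible (hence generic) points, by a unique twist $\epsilon\in G$.

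The core of the argument is a fibre count. Fix a generic $\eta\in Y_0:=\overline{r(\X_0)}$ and let $\chi\in\X_0$ restrict to it. Any other preimage $\chi'\in\X_0$ of $\eta$ satisfies $\overline{r}(q_M(\chi'))=q_\partial(\eta)=\overline{r}(q_M(\chi))$, so by Theorem~\ref{thm:birational} (which gives that $\overline{r}$ is generically injective, i.e.\ of degree one onto its image) we get $q_M(\chi')=q_M(\chi)$; hence $\chi'=\epsilon\cdot\chi$ for a unique $\epsilon\in G$. The condition $r(\chi')=\eta$ then reads $\iota^*(\epsilon)\cdot\eta=\eta$. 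At a generic boundary character the traces of the generators of each cusp torus are non--zero, so the $H^1(\partial M,\Z_2)$--action is free there; thus $\iota^*(\epsilon)\cdot\eta=\eta$ forces $\iota^*(\epsilon)=0$, i.e.\ $\epsilon\in\ker\iota^*$. Since distinct twists give distinct characters at a generic point, this shows
\[
\deg\bigl(r\co\X_0\to Y_0\bigr)=\#\{\epsilon\in\ker\iota^*:\epsilon\cdot\chi\in\X_0\}\le|\ker\iota^*|.
\]

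Finally I would compute $|\ker\iota^*|$. By the half--lives--half--dies principle, applied over $\Z_2$ to the orientable $3$--manifold $M$, the image of $\iota^*\co H^1(M,\Z_2)\to H^1(\partial M,\Z_2)\cong\Z_2^{2h}$ is a Lagrangian for the intersection form and hence has dimension $h$; therefore $|\image\iota^*|=2^h$ and $|\ker\iota^*|=2^{-h}|H^1(M,\Z_2)|$. Combining this with the displayed inequality yields the asserted bound $\deg r\le 2^{-h}|H^1(M,\Z_2)|$, and when $H^1(M,\Z_2)\cong\Z_2^h$ the bound equals $1$, forcing $r$ to be birational onto its image. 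The main obstacle I anticipate is the genericity bookkeeping: one must verify that at a generic point of $\X_0$ the representation is irreducible and its boundary restriction has non--parabolic, trace--non--zero holonomy on every cusp (so that both the lifting fibre is a $G$--torsor and the boundary $\Z_2$--action is free), and that the twisting action interacts transparently with the irreducible component $\X_0$. It is the deformation theory underlying Thurston's Dehn surgery space that makes these trace functions non--constant, hence generically non--vanishing.
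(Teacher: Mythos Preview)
Your argument is correct and follows essentially the same route as the paper's: deduce the bound from Theorem~\ref{thm:birational} by observing that the $\SL$--lifts of a generic $\PSL$--character form an $H^1(M,\Z_2)$--torsor, and then use half--lives--half--dies (equivalently, the paper's ``by duality'' remark about $\image(H_1(\partial M,\Z_2)\to H_1(M,\Z_2))\cong\Z_2^h$) to see that exactly $2^h$ twists are detected on the boundary. Your phrasing via $\ker\iota^*$ and the explicit check that the $H^1(\partial M,\Z_2)$--action is free at boundary characters with non--zero traces makes the genericity bookkeeping more transparent than the paper's terse appeal to Dunfield's Corollary~3.2, but the content is the same.
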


The proof of Theorem \ref{thm:birational} is a generalisation of Dunfield's argument. Our new contributions are the construction of an explicit Zariski dense set in $\X_0$ using a Dehn surgery argument, and a proof of the fact that Hodgson's \emph{volume differential} \cite{ho} is an exact form on the \emph{eigenvalue variety} \cite{tillus_ei}. The eigenvalue variety $\Ei(M)$ of a multi-cusped hyperbolic 3-manifold is a natural generalisation of the curve defined by the $A$--polynomial \cite{ccgls} for a one-cusped hyperbolic 3--manifold. Choose a basis $(\m_i, \longitude_i)$ for $H_1(T_i),$ where $T_i$ is a torus cross-section of the i--th cusp, and suppose that $M$ has $h$ cusps. Then the eigenvalue variety is the Zariski closure of the set of points $(m_1, l_1, \ldots, m_h, l_h)$ in $(\C\setminus \{0\})^{2h}$ with the property that there is a representation $\rho\co \pi_1(M)\to \SL$ such that ${\rho}(\m_i)$ and ${\rho}(\longitude_i)$ have eigenvalues $m_i$ and $l_i$ respectively with respect to a common eigenvector. In this notation, Hodgson's volume differential is the 1--form
$$\eta = -\sum_{i=1}^h \Big(\log |l_{i}| \de \arg m_i - \log |m_{i}| \de \arg l_i\Big)$$
on the eigenvalue variety, 
where $M$ is oriented, each boundary component is given the induced orientation and $(\m_i, \longitude_i)$ is a left-handed basis with respect to this orientation.

The geometric significance of this form is as follows (see \S\S2--3 for details and references). A representation $\rho\co \pi_1(M)\to\SL$ determines a pseudo-hyperbolic structure on $M,$ i.e.\thinspace a pseudo-developing map from its universal cover $\widetilde{M}$ to hyperbolic space $\H^3$ with holonomy $\rho.$ Taking the volume of this structure (which is possibly zero or negative) gives a function on the $\SL$--character variety, $\Vol_M\co \X(M)\to \RR.$ Dunfield establishes fundamental results about this function, which carry over to our setting. These are recalled in \S\ref{sec:Volume results}. Work of Hodgson \cite{ho} and Neumann and Zagier \cite{nz} shows that $d\Vol_M=\omega$ (see \S4.5 of \cite{ccgls} and \S\ref{sec:Hodgson's volume differential} below), where $\omega$ is the 1--form $\eta$ interpreted as a form on the complement of a suitable subvariety of $\X_0.$ We prove in \S\ref{sec:Exactness of volume form} that $\eta$ is also exact on the complement of a suitable subvariety of the eigenvalue variety. This is then used to show that the map $r$ on $\PSL$--characters has degree one by studying it on a suitable Zariski dense set of characters. This set is defined in \S\ref{sec:A Zariski dense set}. The proofs of Theorem~\ref{thm:birational} and Corollary~\ref{cor:birational} are put together in \S\ref{sec:Proofs}.

Dunfield~\cite{du2} applies his birationality theorem to settle a conjecture due to      
Boyer and Zhang concerning cyclic surgeries of certain hyperbolic knots.        
Theorem 1 can be used to understand Dehn surgery spaces of multi-cusped         
hyperbolic 3-manifolds; see Klaff \cite{k} for an application.

\rk{Acknowledgements} The authors thank Steven Boyer, Daryl Cooper and Craig Hodgson for
helpful conversations, and the referee for very useful comments, corrections and suggestions. The second author is partially supported by ARC grant DP130103694. 


\section{Volume of representations}
\label{sec:Volume results}

We refer the reader to \cite{cs, bozh} for standard facts about character varieties, and to \cite{mu} as a reference for algebraic geometry.
This section summarises the material we need from Dunfield~\cite{du2}.

Let $M$ be a complete hyperbolic 3--manifold of finite volume with universal cover $\widetilde{M}.$ If $M$ is closed, then the volume of a representation $\rho\co\pi_1(M)\to\PSL$ is well-defined by letting $\Vol_M(\rho)=\int_F f_\rho^*(\vol_{\H^3})$, where $f_\rho\co\widetilde{M}\to\H^3$ is any smooth equivariant map, $\vol_{\H^3}$ is the usual volume form on hyperbolic space and $F$ is any fundamental domain for $M$. The volume is an invariant of the conjugacy class of a representation, and Dunfield proves the following:
\begin{thm}[Gromov-Thurston-Goldman, in Dunfield \cite{du2}, Theorem 6.1]\label{volume-complete}
Let $M$ be a closed, hyperbolic 3--manifold of finite volume, and $\chi \in \PX(M)$. If $\rho$ is a representation with character $\chi$ and $\Vol_{M}(\rho) = \pm \Vol(M)$, then $\rho$ is discrete and faithful.
\end{thm}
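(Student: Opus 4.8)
The plan is to realise the volume function as the pairing of the fundamental class of $M$ with the pullback of a universal bounded cohomology class, and then to deduce the statement from Gromov's proportionality principle together with a rigidity argument in the equality case. Concretely, I would first fix the \emph{volume cocycle} on $\PSL \cong \mathrm{Isom}^+(\H^3)$: choosing a basepoint $x_0 \in \H^3$, the assignment $(g_0, g_1, g_2, g_3) \mapsto \vol_{\H^3}(g_0 x_0, g_1 x_0, g_2 x_0, g_3 x_0)$, the signed volume of the geodesic tetrahedron on the given vertices, is a bounded homogeneous $3$--cocycle whose class $\omega_{\H^3} \in H^3_{cb}(\PSL; \RR)$ is independent of $x_0$. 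Its essential property is that $\|\omega_{\H^3}\|_\infty = v_3$, the supremal volume of a geodesic tetrahedron in $\H^3$, attained exactly by the regular ideal tetrahedron.

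Next I would identify $\Vol_M(\rho)$ with the evaluation $\langle \rho^*\omega_{\H^3}, [M]\rangle$, where $[M] \in H_3(M;\RR)$ is the fundamental class and $\rho^*\omega_{\H^3} \in H^3_b(\pi_1 M; \RR) \cong H^3_b(M;\RR)$. This follows by comparing the bounded cohomological pairing with the defining integral $\int_F f_\rho^*(\vol_{\H^3})$ via the straightening of singular simplices to geodesic simplices, which does not change evaluation against a cocycle pulled back from $\H^3$. Granting this, the duality between the Gromov norm $\|M\|$ and bounded cohomology yields
$$|\Vol_M(\rho)| = |\langle \rho^*\omega_{\H^3}, [M]\rangle| \le \|\omega_{\H^3}\|_\infty \cdot \|M\| = v_3 \cdot \|M\|,$$
and Gromov's computation $\|M\| = \Vol(M)/v_3$ for the complete structure turns this into $|\Vol_M(\rho)| \le \Vol(M)$. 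Thus $\pm\Vol(M)$ is the extremal value of the volume function.

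The substance of the theorem is the equality case, which is where I expect the real difficulty to lie. When $|\Vol_M(\rho)| = \Vol(M)$, the chain of inequalities above must be sharp: along any sequence of fundamental cycles whose simplices are asymptotically regular and ideal (such cycles exist by Gromov--Thurston and realise $\|M\|$), the $\rho$--straightened image simplices must themselves be asymptotically regular, ideal, and consistently oriented. One then passes to the associated equivariant boundary map on $\partial \H^3 \cong \C P^1$ and argues, as in the Gromov--Thurston proof of Mostow rigidity, that a measurable $\rho$--equivariant map carrying the vertex sets of almost all regular ideal tetrahedra to vertex sets of regular ideal tetrahedra must be the restriction of a M\"obius transformation; conjugating by it shows that $\rho$ coincides with the holonomy of the complete structure, hence is discrete and faithful. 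The delicate points are extracting a well--defined equivariant boundary map from the extremality condition and upgrading the ``almost everywhere regular'' statement to genuine rigidity, for which the uniqueness of the volume maximiser and an ergodicity argument for the $\pi_1 M$--action on $\partial\H^3 \times \partial\H^3$ are essential.
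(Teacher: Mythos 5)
The paper offers no proof of this statement: it is imported verbatim as Theorem 6.1 of Dunfield \cite{du2}, whose proof is precisely the Gromov--Thurston--Goldman argument you outline (the bounded volume class of norm $v_3$, straightening to identify $\Vol_M(\rho)$ with $\langle\rho^*\omega_{\H^3},[M]\rangle$, the bound $|\Vol_M(\rho)|\le v_3\|M\|=\Vol(M)$, and the equality case via an equivariant boundary map sending regular ideal simplices to regular ideal simplices plus ergodicity). Your sketch is therefore essentially the same approach as the cited source, correctly flagging the genuinely delicate steps; the only point worth making explicit is that elementary (hence in particular reducible) representations must be excluded at the outset by noting their volume vanishes, since no boundary map argument applies to them.
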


If $M$ is not closed, Dunfield defines the volume of a representation $\rho:\pi_1(M)\to\PSL$ with respect to a so-called \emph{pseudo-developing map} $f_\rho:\widehat{M}\to\overline{\H}^3.$  A pseudo-developing map is a $\rho$--equivariant map which satisfies two technical conditions which ensure that the integral defining the volume of $\rho$ is both finite and independent of the chosen pseudo-developing map. We recall the definition.

The space $\overline{\H}^3$ is the usual compactification of $\H^3$ obtained by adding the sphere at infinity. The space $\widehat{M}$ is obtained from $\widetilde{M}$ by adding countably many points as follows. The manifold $M$ is naturally the interior of a compact manifold $N$ with boundary. Choose a collar neighbourhood $T^2 \times [0, \infty]$ for each boundary component of $N,$ where we assume that $T^2\times\{\infty\} \subseteq \partial N.$ Then $\overline{M}$ is the quotient space obtained by collapsing each $T^2\times \{\infty\}$ to a point. There is a natural inclusion $M \hookrightarrow \overline{M}$ and  $\overline{M}\setminus M$ is a finite collection of points, one for each cusp of $M.$ The construction for the universal cover is analogous. Lift the product structure at each boundary component of $N$ to $\widetilde{N}.$ Each connected component of one of the collar neighbourhoods in $N$ is of the form $\RR^2 \times [0,\infty],$ and $\widehat{M}$ is obtained by collapsing each $\RR^2\times \{\infty\}$ to a point.
Note that there is a natural map $\widehat{M} \to \overline{M},$ and each $v\in \widehat{M}\setminus \widetilde{M},$ has a neighbourhood of the form $N_v = (P_v \times [0,\infty)) \cup \{v\}.$ The action of $\pi_1(M)$ by deck transformations extends naturally to $\widehat{M}$ and preserves the chosen product structure of the cusps. With this notation, a $\rho$--equivariant map $f_\rho:\widehat{M}\to\overline{\H}^3$ is a pseudo-developing map if it satisfies the following two conditions:
\begin{enumerate}
\item $f_\rho(\widetilde{M}) \subseteq \H^3$ and $f_\rho(\widehat{M}\setminus \widetilde{M}) \subseteq \partial \H^3;$ and
\item for each $v\in \widehat{M}\setminus \widetilde{M},$ $f_\rho$ maps each ray $\{p\} \times [0, \infty)$ in $N_v$ to a geodesic ray in $\H^3$ with ideal endpoint $f_\rho(v)$ and parameterises this ray by arc-length.
\end{enumerate}

Given a pseudo-developing map $f_\rho:\widehat{M}\to\overline{\H}^3,$ define $\Vol_M(\rho, f_\rho)=\int_F f_\rho^*(\vol_{\H^3}),$ where, as above, $F$ is a chosen fundamental domain. In fact, Dunfield takes the absolute value of this integral, but for our purposes, it will be more convenient to work with a \emph{signed} volume that takes orientation into account. Also, Dunfield only considers hyperbolic 3--manifolds with one cusp. However, a careful examination of the material in \S2.5 of \cite{du2} reveals that it applies to multi--cusped hyperbolic 3--manifolds. The following results from \cite{du2} are therefore at our disposal:

\begin{lem}[Dunfield \cite{du2}, Lemma 2.5.2]
The function $\Vol_M\co\PX_0\to\RR$ defined by taking $\Vol_M(\chi)=\Vol_M(\rho, f_\rho)$, where $\rho$ is any representation with character $\chi$ and $f_\rho$ is any pseudo-developing map for $\rho$, is well--defined.
\end{lem}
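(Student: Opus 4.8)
The plan is to establish two things: finiteness of the integral defining $\Vol_M(\rho, f_\rho)$, and its independence of the chosen pseudo-developing map $f_\rho$. Once both hold, the assignment $\chi \mapsto \Vol_M(\rho, f_\rho)$ depends only on $\chi$, since the volume of a representation is a conjugacy invariant (the volume form $\vol_{\H^3}$ is $\PSL$--invariant, so conjugating $\rho$ by $g$ and replacing $f_\rho$ by $g \circ f_\rho$ leaves the integral unchanged), which gives well-definedness.

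First I would address finiteness. The potential obstruction to $\int_F f_\rho^*(\vol_{\H^3})$ being finite is the noncompactness of $F$ near the cusps. Here the two defining conditions on a pseudo-developing map do the work: near each ideal point $v$, condition (2) forces $f_\rho$ to send the product rays $\{p\}\times[0,\infty)$ to geodesic rays converging to the single ideal point $f_\rho(v)$, parameterised by arc-length. One would analyse the pullback $f_\rho^*(\vol_{\H^3})$ on the cusp neighbourhood $N_v = P_v \times [0,\infty)$ and show that, in the horoball coordinates adapted to the ideal point $f_\rho(v)$, the image collapses at an exponential rate as the ray parameter tends to $\infty$. This exponential decay of the hyperbolic volume form along cusps is exactly what makes the integral over the noncompact fundamental domain converge; it is the same mechanism that gives finite volume for the complete structure itself. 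Since $M$ has finitely many cusps and $F$ meets each cusp in a bounded-cross-section product region, summing over the cusps keeps the total finite.

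Next I would prove independence of $f_\rho$. Given two pseudo-developing maps $f_0, f_1$ for the same $\rho$, the strategy is a standard straight-line homotopy argument: since any two points of $\overline{\H}^3$ are joined by a geodesic, one builds a $\rho$--equivariant homotopy $F_t$ between $f_0$ and $f_1$ by geodesic interpolation, which descends to a map on $F \times [0,1]$. Applying Stokes' theorem to the closed form $\vol_{\H^3}$ (which is closed since it is a top-dimensional form on the $3$--manifold $\H^3$, and pulls back to a closed $3$--form) over this homotopy shows that the difference $\Vol_M(\rho, f_0) - \Vol_M(\rho, f_1)$ equals a boundary term. The boundary consists of the side faces of $F$ (where equivariance and the $\pi_1(M)$--action cause the contributions to cancel in pairs under the group action) and the faces near the ideal points. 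The delicate point is controlling the behaviour of the homotopy at the cusps: one must verify that the two pseudo-developing maps agree at the ideal points $v$ (or can be arranged to, after noting that $f_\rho(v)$ is determined by the holonomy of the peripheral subgroup fixing $v$) so that the boundary contribution from the cusps vanishes.

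The main obstacle I expect is precisely this cusp analysis in the Stokes computation: making rigorous that the boundary term at the ideal points contributes nothing, because the interpolating geodesics degenerate at $\partial\H^3$ where the volume form is not defined in the usual sense. This requires a careful truncation argument — integrating over $F$ truncated at distance $R$ from the cusps, applying Stokes on the compact truncation, and then showing the truncation boundary contributions tend to $0$ as $R \to \infty$ using the same exponential decay established in the finiteness step. Since this lemma is quoted directly from Dunfield \cite{du2} (Lemma 2.5.2), the essential content for the present paper is the remark already made in the excerpt: a careful examination of \S2.5 of \cite{du2} shows that Dunfield's one-cusped arguments extend verbatim to the multi-cusped setting, because the cusps are handled independently and summing over finitely many cusps preserves both finiteness and the vanishing of boundary terms.
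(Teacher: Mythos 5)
The paper does not actually prove this lemma: it is imported verbatim from Dunfield (\S 2.5 of \cite{du2}), and the only content the authors add is the remark that Dunfield's one--cusp arguments go through cusp by cusp in the multi--cusped setting. Your outline --- convergence of the integral from the two defining conditions on a pseudo-developing map, conjugacy invariance from the $\PSL$--invariance of $\vol_{\H^3}$, and independence of the map via an equivariant geodesic homotopy, Stokes' theorem, and a truncation argument at the cusps --- is the shape of Dunfield's actual proof, so there is no divergence of method to report.

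There is, however, one step in your sketch that is wrong as stated, and it conceals the real content of the lemma. You propose to kill the cusp boundary terms by arranging that the two pseudo-developing maps agree at the ideal points, ``noting that $f_\rho(v)$ is determined by the holonomy of the peripheral subgroup fixing $v$.'' Equivariance only forces $f_\rho(v)$ to lie in the fixed-point set on $\partial\H^3$ of the image under $\rho$ of the peripheral subgroup stabilising $v$; for the generic character in a Dehn surgery component that image is loxodromic, so the fixed-point set has \emph{two} points, and if the peripheral image is trivial it is all of $\partial\H^3$. Hence the two maps may genuinely cone the cusp to different ideal points, and the lemma must show the volume is nevertheless unchanged --- this is precisely the nontrivial case. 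The repair (and Dunfield's route) is not to match the ideal points but to note that the two cusp cones over the same torus map form a relative $3$--cycle; since $H_3(\H^3)=0$ it bounds a $4$--chain, and $\vol_{\H^3}$ is a closed top-degree form, so its integral over the difference of the cones vanishes, with the truncation and exponential-decay estimates you describe needed only to justify this at infinity. With that substitution your argument is complete, and, as you say, the multi-cusp case follows by treating the finitely many cusps independently.
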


\begin{lem}[Dunfield \cite{du2}, Lemma 2.5.4]\label{volume-factors}
  Let $M$ be a complete cusped hyperbolic 3--manifold of finite volume, and
  let $N$ be a closed hyperbolic 3--manifold obtained by Dehn filling on $M$.
  Assume that $\rho$ is a representation of $\pi_1(M)$ which factors through a
  representation $\rho'$ of $\pi_1(N)$. Then $\Vol_{N}(\rho') = \Vol_{M}(\rho, f_\rho)$, where $f_\rho$ is any pseudo-developing map for $\rho$.
\end{lem}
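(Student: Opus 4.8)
The plan is to produce one convenient pseudo-developing map and then invoke the preceding lemma, which guarantees that $\Vol_M(\rho,f_\rho)$ is independent of the choice of $f_\rho$; it therefore suffices to verify the identity for a single well-chosen $f_\rho$. Write $N = M_0 \cup \bigcup_i V_i$, where $M_0 = N\setminus\bigcup_i\mathrm{int}\,V_i$ is the compact core (equally a compact core of $M$), the $V_i$ are the solid tori attached by the Dehn filling, and $T_i = \partial V_i$ are the filling tori. Since $N$ is closed, fix a smooth $\rho'$--equivariant map $g\co \widetilde{N}\to\H^3$ computing $\Vol_N(\rho')$. The filling induces a surjection $\pi_1(M)\to\pi_1(N)$, which lets me lift the composite $\widetilde{M}\to M\hookrightarrow N$ to a map $F\co\widetilde{M}\to\widetilde{N}$ that is equivariant for this homomorphism, so that $g\circ F\co\widetilde{M}\to\H^3$ is smooth and $\rho$--equivariant. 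I would take $f_\rho = g\circ F$ on $\widetilde{M}$ and extend it over the cusp points.

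The extension over the cusps is where the filling hypothesis is used. The $i$--th filling slope is the meridian of $V_i$ and lies in $\ker(\pi_1(M)\to\pi_1(N))$, hence is killed by $\rho$; consequently the peripheral holonomy $\rho(\pi_1 T_i)$ is cyclic, generated by the image of the core curve of $V_i$, whose axis has a well-defined ideal endpoint $\xi_i\in\partial\H^3$. I would set $f_\rho(v_i) = \xi_i$ and, on the cusp region $U_i = (T_i\times[0,\infty))\cup\{v_i\}$, define $f_\rho$ by sending each ray $\{p\}\times[0,\infty)$ to the geodesic ray towards $\xi_i$, parameterised by arc length and matching $g\circ F$ on $T_i$. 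By construction this satisfies the two pseudo-developing conditions. Decomposing a fundamental domain for $N$ into $M_0$ and the $V_i$, and a fundamental domain for $M$ into $M_0$ and the $U_i$, gives
\[
\Vol_N(\rho') = \int_{M_0}\alpha + \sum_i \int_{V_i} g^*(\vol_{\H^3}),
\]
\[
\Vol_M(\rho,f_\rho) = \int_{M_0}\alpha + \sum_i \int_{U_i} f_\rho^*(\vol_{\H^3}),
\]
where $\alpha$ is the common $3$--form induced on $M_0$ by $g^*(\vol_{\H^3})$ and $(g\circ F)^*(\vol_{\H^3})$; these agree because $F$ is a local diffeomorphism onto the preimage of $M$ in $\widetilde{N}$. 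The lemma thus reduces to showing $\int_{V_i} g^*(\vol_{\H^3}) = \int_{U_i} f_\rho^*(\vol_{\H^3})$ for each $i$.

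To prove this I would glue $V_i$ to the orientation-reversed cusp cone $U_i$ along their common boundary $T_i$, on which $g$ and $f_\rho$ agree, obtaining a closed oriented pseudomanifold $W_i$ with a single cone point $v_i$ and a map $G_i\co W_i\to\overline{\H}^3$; the difference of the two integrals is then $\int_{W_i} G_i^*(\vol_{\H^3})$. Choosing a primitive $\beta$ with $d\beta = \vol_{\H^3}$ on the contractible space $\H^3$ and applying Stokes' theorem to $W_i$ with a small neighbourhood of $v_i$ excised, all interior contributions cancel (in particular nothing special occurs along the core circle of $V_i$, whose image lies in $\H^3$), and the difference collapses to the limiting flux $\lim_{\epsilon\to 0}\int_{\partial B_\epsilon(v_i)} G_i^*\beta$ through the linking tori of the cone point.

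The crux, and the step I expect to demand real work, is showing that this flux vanishes. As $\epsilon\to 0$ the radial pseudo-developing condition funnels the entire linking torus $\partial B_\epsilon(v_i)$ into the single ideal point $\xi_i$, collapsing its angular directions; together with a suitable choice of $\beta$ and its decay towards $\partial\H^3$, this should force the flux to zero. This is precisely the type of estimate underlying the convergence and well-definedness statements in the preceding lemma, and I would adapt those estimates here. Controlling this flux uniformly over all cusps is the \emph{main obstacle}; once it is established, summing the vanishing differences over $i$ yields $\Vol_N(\rho') = \Vol_M(\rho,f_\rho)$.
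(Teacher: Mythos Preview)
The paper does not supply its own proof of this lemma; it is quoted from Dunfield \cite{du2} and simply invoked. Your outline is in fact a faithful reconstruction of Dunfield's argument: manufacture a pseudo-developing map for $M$ from an equivariant map for the closed filling $N$, split both volume integrals over a common compact core together with the filling solid tori on one side and the cusp cones on the other, and reduce the discrepancy to a boundary flux at each cone point.

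One correction of detail: the image of the core curve need not be loxodromic, so speaking of its ``axis'' and a ``well-defined ideal endpoint'' is not quite right---the element may be trivial, parabolic, elliptic, or loxodromic. What you actually need, and what suffices, is any point of $\partial\H^3$ fixed by the cyclic group $\rho(\pi_1 T_i)$; such a point always exists, and any choice gives a valid pseudo-developing extension. The flux-vanishing step you correctly identify as the crux is exactly the estimate Dunfield performs: working in the upper half-space model with $\xi_i=\infty$ and the primitive $\beta = -\tfrac{1}{2z^2}\,dx\wedge dy$ of $\vol_{\H^3}$, the arc-length condition forces $z\sim e^{t}$ along the rays while the transverse image stays bounded, so the flux through the linking torus decays like $e^{-2t}$ and vanishes in the limit. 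Once that is written out, your argument is complete and matches Dunfield's.
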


For a representation $\rho\co\pi_1(M)\to \SL,$ we define the volume of $\rho$ to be the volume of its composition with the quotient map $\SL\to\PSL.$


\section{Hodgson's volume differential}
\label{sec:Hodgson's volume differential}

For the material of this section, which is well-known to experts, we refer the reader to Hodgson~\cite{ho}, Neumann--Zagier~\cite{nz} and Cooper-Culler-Gillet-Long-Shalen \cite{ccgls}. Complete details can be found in \cite{hrst2014}, and we will only give an overview.

Consider lifts $\tilde{\chi}_0$ of $\chi_0$ and $\X_0$ of $\PX_0$ to the $\SL$--character variety. Also denote $\tilde{\rho}_0$ a lift of $\tilde{\chi}_0$ to the $\SL$--representation variety. We may choose a fundamental domain $D_0$ for the action of $\tilde{\rho}_0$ consisting of a union of convex ideal hyperbolic polyhedra (see Epstein and Penner~\cite{ep}). The ideal vertices of the polyhedra correspond to fixed points of peripheral subgroups. Given a smooth 1--parameter family of representations $\tilde{\rho}_t$ sufficiently close to $\tilde{\rho}_0,$ we obtain a smooth 1--parameter family of fundamental domains $D_t$ obtained by small deformations of $D_0,$ because all peripheral subgroups have images not contained in $\{\pm E\}$ near $\tilde{\rho}_0$ and so the fixed point sets vary smoothly. Writing $\tilde{\chi}_t$ for the character of $\tilde{\rho}_t,$ we have $\Vol_M(\tilde{\chi}_t) = \Vol_M(\tilde{\rho}_t) =\Vol(D_t),$ noting that there is a natural definition of a pseudo-developing map at the complete structure, and that this can be deformed for nearby representations using $D_t.$

Choose a basis $\{ \m_i, \longitude_i\}$ for $H_1(T_i),$ where $T_i$ is a torus cross-section of the i--th cusp, and we use the same orientation conventions as in the introduction. The eigenvalues $m_i(t)$ and $l_i(t)$ associated to a common eigenvector of $\tilde{\rho}_t(\m_i)$ and $\tilde{\rho}_t(\longitude_i)$ vary smoothly with $t.$ Hodgson~\cite{ho} computes the derivative of volume using the Milnor-Schl\"afli formula~\cite{Mil} for the derivative of volume of a smooth 1--parameter family of hyperbolic polyhedra with ideal vertices, obtaining:
$$\frac{d}{dt}\Vol_M(\tilde{\chi}_t) = -\sum_{i=1}^h \Big(\log |l_{i}(t)| \frac{d}{dt}\arg m_i(t) - \log |m_{i}(t)| \frac{d}{dt}\arg l_i(t)\Big).$$

As discussed in \cite{ho, ccgls}, the above application of the Schl\"afli formula can be modified to show that the above formula holds at each point of the character variety (not just on $\X_0$), except possibly at the points where a peripheral subgroup is contained in $\{\pm E\}.$ The key idea is to allow both ideal and finite vertices, and negatively oriented or flat polyhedra (whose volume is negative or zero respectively). The main issue at $\{\pm E\}$ is that the fixed point set may not converge, and so the above argument of deforming polyhedra with ideal vertices may not apply. To avoid this situation, let 
\begin{equation}\label{def:V}
V = \X_0 \cap \bigcup_{i=1,\ldots,h} \{\chi(\m_i)^2 = \chi(\longitude_i)^2=4\}.
\end{equation}
Then $V$ is a proper subvariety of $\X_0,$ and the function $\Vol_M\co \X_0 \setminus V \to \RR$ is smooth. 


\section{Exactness of the volume differential}
\label{sec:Exactness of volume form}

Given $\gamma\in\pi_1(M)$, there is a rational function $I_{\gamma}:\X_0\to\C$ defined by $\tilde{\chi}\to\tilde{\chi}(\gamma)$. A well-known consequence of Thurston's Hyperbolic Dehn Surgery Theorem and its proof is that $\tilde{\chi}_0$ is a smooth point of $\tilde{X}(M),$ and that the map $I \co{\X}_0 \to \C^h$ defined by $I(\tilde{\chi}) = (I_{\m_1},...,I_{\m_h})$ maps a small open neighbourhood of $\tilde{\chi}_0$
analytically to an open neighbourhood of $(\epsilon_1 2,...,\epsilon_h 2)$, where $I_{\m_{i}}(\tilde{\chi}_{0}) = \epsilon_i 2$ with $\epsilon_i\in \{\pm 1\}.$ See, for instance Thurston \cite{t} (\S5.8), Neumann--Zagier \cite{nz} and Porti \cite{Por97} (Corollaries 3.27 and 3.28). 

This implies that there is a simply connected neighbourhood $V_0 \subset \X_0$ of $\tilde{\chi}_0$ with the property that the volume of any character $\tilde{\chi} \in V_0$ is independent of the path of integration chosen from $\tilde{\chi}_0$ to $\tilde{\chi}$ inside $V_0$ and, moreover, that the volume of $\tilde{\chi}$ only depends on its image under the restriction map $r\co  \X_0 \to \X(\partial M),$ because the latter can be imbued with the coordinates $(I_{\m_1}, I_{\longitude_1}, I_{\m_1\longitude_1}, \ldots, I_{\m_h}, I_{\longitude_h}, I_{\m_h\longitude_h}).$ Note that this observation is not valid globally; for instance, at the character of the discrete and faithful representation, we have $r(\chi_0) = r (\overline{\chi}_0)$ but $\Vol_{M} (\overline{\chi}_0) = -\Vol_{M}(\chi_0) = - \Vol(M) \neq \Vol(M) =\Vol_{M}(\chi_0).$ We will now show that points with this property are exceptional: the peripheral traces do determine the volume in the complement of a proper subvariety.

\begin{pro}\label{exactn-volume-form}
Denote $\Ei_0$ the component of $\Ei(M)$ corresponding to $\X_0.$
Then there is a proper subvariety $U$  of $\Ei_0$ such that the 1--form $\eta$ is exact on $\Ei_{0}\setminus U.$ 

Moreover, there is a proper subvariety $V' \subset \X_0$ containing the subvariety $V$ defined in (\ref{def:V}) with the property that the restriction $\Vol_M\co \X_0 \setminus V' \to \RR$ factors through a real valued function on $r(\X_0\setminus V').$ In particular, if $\chi_1, \chi_2 \in \X_0\setminus V'$ and $r(\chi_1) = r (\chi_2)$, then $\Vol_{M} (\chi_1) = \Vol_{M}(\chi_2)$.
\end{pro}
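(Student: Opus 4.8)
My plan is to deduce both assertions from a single cohomological observation, using the common image $r(\X_0)=p(\Ei_0)$ in $\X(\partial M)$ as a bridge between the eigenvalue variety and the character variety. Here $p\co \Ei_0\to \X(\partial M)$ is the trace map whose components are $m_i+m_i^{-1}$, $l_i+l_i^{-1}$ and $m_il_i+(m_il_i)^{-1}$, giving the coordinates $(I_{\m_i},I_{\longitude_i},I_{\m_i\longitude_i})$; this is a finite map onto $r(\X_0)$ whose deck group is generated by the simultaneous inversions $(m_i,l_i)\mapsto(m_i^{-1},l_i^{-1})$ at each cusp. The first step is to record that $\eta$ is invariant under each such inversion: replacing $(m_i,l_i)$ by $(m_i^{-1},l_i^{-1})$ reverses the sign of $\log|m_i|,\log|l_i|$ and of $d\arg m_i,d\arg l_i$ simultaneously, so each summand $\log|l_i|\,d\arg m_i$ and $\log|m_i|\,d\arg l_i$ is fixed. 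Hence, after deleting the singular locus and the coordinate hyperplanes, $\eta$ descends to a $1$--form $\bar\eta$ on a Zariski--open subset of $r(\X_0)$ with $p^*\bar\eta=\eta$. By construction $\bar\eta$ is precisely the descent that realises the ``interpretation'' of \S\ref{sec:Hodgson's volume differential}, so that $\omega=r^*\bar\eta$ on $\X_0$.

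Exactness of $\eta$ is then a transfer argument. By Hodgson and Neumann--Zagier we have $d\Vol_M=\omega$ on $\X_0\setminus V$, and since $\Vol_M$ is a single--valued function there, $\omega$ is exact, so $[\omega]=0$ in $H^1(\X_0\setminus V;\RR)$. Shrinking to a Zariski--open set on which $r$ is a genuine $\deg(r)$--fold covering (pushing the branch locus into the removed subvariety), the transfer relation $r_!\,r^*=\deg(r)\cdot\mathrm{id}$ on real cohomology gives $\deg(r)\,[\bar\eta]=r_!\,r^*[\bar\eta]=r_![\omega]=0$; as $\deg(r)\neq0$ we conclude $[\bar\eta]=0$, i.e.\ $\bar\eta=d\bar G$ for some function $\bar G$ on $r(\X_0)$. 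Pulling back along $p$ gives $\eta=p^*\bar\eta=d(\bar G\circ p)$ on the complement $\Ei_0\setminus U$ of a proper subvariety, which is the first assertion.

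For the second assertion I would instead pull $\bar\eta=d\bar G$ back along $r$: on $\X_0$ this yields $\omega=d(\bar G\circ r)$, while Hodgson's formula gives $\omega=d\Vol_M$. Hence $d(\Vol_M-\bar G\circ r)=0$ on $\X_0\setminus V'$, where $V'$ is the proper subvariety collecting $V$, the branch locus of $r$, and the relevant singular loci. Since $\X_0$ is irreducible, $\X_0\setminus V'$ is connected, so $\Vol_M-\bar G\circ r$ is constant; thus $\Vol_M=\bar G\circ r+c$ factors through $r$, and in particular $r(\chi_1)=r(\chi_2)$ forces $\Vol_M(\chi_1)=\Vol_M(\chi_2)$.

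I expect the main obstacle to be the bookkeeping that makes the transfer and de Rham steps legitimate. One must choose the removed subvarieties $U\subset\Ei_0$, $V'\subset\X_0$ and their counterpart in $r(\X_0)$ compatibly, so that on the remaining smooth loci all three maps---$p$, $r$, and the quotient by the inversions---are honest finite covering maps, and so that $\eta$, $\bar\eta$, $\omega$ are genuinely related by pullback there. The substantive point is that $\bar G$ is globally \emph{single--valued}: this is exactly what the vanishing of $[\bar\eta]$ buys us, rather than any ad hoc integration, and it is why a naive attempt to build a primitive on $\Ei_0$ by integrating $\eta$ from the complete structure is circular (its periods are a priori volume differences between points of a single $r$--fibre). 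Checking $p(\Ei_0)=r(\X_0)$ and that $\deg(r)$ is a nonzero integer, so that one may divide by it in real cohomology, also belongs to this step.
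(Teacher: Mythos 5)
Your proposal is correct and follows essentially the same route as the paper: both exploit the $\Z_2^h$--invariance of $\eta$ to descend it to $Y_0=p(\Ei_0)=r(\X_0)$, deduce vanishing of the descended class from $[\de\Vol_M]=0$ via the injectivity of $r^*$ on real cohomology for finite covers (your transfer identity $r_!\,r^*=\deg(r)\cdot\mathrm{id}$ is exactly how the paper's cited injectivity is proved), and then pull a primitive back along $p$ and $r$ to get both assertions. The only cosmetic difference is that the paper establishes the compatibility $\omega=r^*\bar\eta$ by passing through the fiber product $\X_E(M)$ rather than by direct descent, which is precisely the bookkeeping you flag as the main obstacle.
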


\begin{proof}
The quotient map $p\co \Ei_0\to \X(\partial M)$ is division by $\Gamma= \Z_2^h,$ where the $h$ $\Z_2$--factors are generated by $(m_i, l_i) \to (m_1^{-1}, l_i^{-1})$ for $1\le i \le h.$ The union of the fixed point sets is contained in the proper subvariety
$$U = \Ei_0\cap \bigcup_{i=1,\ldots,h} \{m_i^2=l_i^2=1\}.$$ 
Notice that $U$ corresponds to the subvariety $V$ of $\X_0$ defined in the previous section. 

Denote $p(\Ei_0) = Y_0 \subset \X(\partial M).$
Then $p^* \co H^1(Y_0\setminus p(U), \RR) \to H^1(\Ei_0\setminus U, \RR)$ is injective with image $H^1(\Ei_0\setminus U, \RR)^\Gamma$ (see, for instance, Hatcher~\cite{ha}, Proposition 3G.1).
The form $\eta$ is invariant under the action of $\Gamma,$ and hence $[\eta] \in H^1(\Ei_0\setminus U, \RR)^\Gamma.$ Whence there is a unique class $c\in H^1(Y_0\setminus p(U), \RR)$ that maps to $[\eta].$ 

The map $r\co \X_0\to Y_0$ has finite degree \cite{tillus_ei}, whence it is a branched cover and the branch set is contained in a proper subvariety of $\X_0.$ Denote $V'$ the union of this subvariety with $V,$ and note that $V$ is invariant under the covering transformations, so that $r\co \X_0\setminus V' \to Y_0$ is a finite cover onto its image. Let $W' = r(V').$ Since subvarieties have real co-dimension at least two, the restriction $r\co \X_0 \setminus V'\to Y_0 \setminus W'$ is a finite cover of connected topological spaces, and so 
$$r^*\co H^1(Y_0 \setminus W' , \RR) \to H^1(\X_0 \setminus V', \RR)$$ 
is an injection. The definitions of $\eta,$ $r$ and $p$ as well as Hodgson's formula for the volume differential imply that $\de\Vol_M\in r^*(c).$ Whence $r^*(c) = [\de \Vol_M] = 0.$ Since $r^*$ is an injection, we have $c=0,$ and so $[\eta]=p^*(c)=0.$ This completes the proof of the exactness statement.

Since the subvariety $U$ has real co-dimension at least 2, $[\eta]=0$ implies that there is a function $\Vol_\Ei\co \Ei_0\setminus U \to \RR$ with  $\de \Vol_\Ei = \eta,$ and which we normalise so that for some $x_1\in \Ei_0$ with $r(x_1) \in Y_0 \setminus W',$ we have $\Vol_\Ei(x_1) = \Vol_M(\chi),$ where $\chi$ satisfies $p(x_1) = r(\chi).$ Since $\eta$ is invariant under $\Gamma,$  there is a function $\Vol_\partial \co Y_0 \setminus W' \to \RR$ such that $\Vol_\Ei \co \Ei_0 \setminus U\to \RR$ factors through $\Vol_\partial.$ 

The last claim follows if we show that $\Vol_M \co \X_0 \setminus V'\to \RR$ also factors through $\Vol_\partial.$ This is done using the following construction from \cite{tillus_ei}. If $\X(M)$ is a variety in $\C^m,$ we now define the variety $\X_E(M)$ in $\C^m \times (\C\setminus \{0\})^{2h}$ by adding $[m_1^{\pm 1}, l_1^{\pm 1}, \ldots, m_h^{\pm 1}, l_h^{\pm 1}]$ to the coordinate ring of $\X(M)$ and adding the following generators to the ideal defining $\X(M):$
\begin{align*}
I_{\m_i} &= m_i + m^{-1}_i,\\
I_{\longitude_i} &= l_i + l^{-1}_i,\\
I_{\m_i\longitude_i} &= m_il_i + m^{-1}_il^{-1}_i,
\end{align*}
for $i=1,\ldots, h,$ noting that the left-hand sides are polynomials in the coordinates of $\X(M).$
The projections $p_2\co \X_E(M) \to \X(M)$ and $r_E \co \X_E(M) \to \Ei(M)$ are dominating maps. Every point in $\X_E(M)$ is of the form $(\chi, x),$ where $\chi\in \X(M)$ and $x\in \Ei(M).$ We define $\Vol\co \X_E(M) \to \RR$ by $\Vol(\chi, x) = \Vol_M(\chi).$ In these coordinates, Hodgson's work shows $\de\Vol = r_E^*(\eta)$ in the complement of the set of characters sending a peripheral subgroup to $\{\pm E\}.$ 
Denote $\X_{E,0}$ the component of $\X_E(M)$ corresponding to $\X_0.$
Since $\eta$ is exact on $\Ei_0\setminus U,$ it follows that $\Vol$ factors through $\Vol_\Ei,$ and hence through $\Vol_\partial,$ on (the complement of a proper subvariety in) $\X_{E,0}.$
Moreover, the action of $\Gamma$ on $\X_E(M)$ gives the quotient map $p_2\co \X_E(M) \to \X(M)$ and $\Vol$ is invariant under this action. Whence $\Vol_M$ also factors through $\Vol_\partial.$ This completes the proof.
\end{proof}


\section{A Zariski dense set}
\label{sec:A Zariski dense set}

Since $r : \PX_0 \to \PX(\partial M)$ is finite--to--one onto its image by Proposition 13 of
\cite{tillus_ei}, it suffices to show that $r$ is one--to--one over a Zariski
dense set $Z \subset r(\PX_0)$. We now determine a suitable set.  

Let $M$ be a complete hyperbolic 3--manifold with $h$ cusps and a chosen
orientation.  Choose a basis $\{ \m_i, \longitude_i\}$ for $H_1(T_i)$, where $T_i$ is
a torus cross section of the $i$--th cusp. Denote by $M_\kappa$ the oriented
3--manifold obtained by Dehn surgery on $M$ with coefficient $\kappa =
(p_1,q_1;...;p_h,q_h)$, where $(p_i,q_i)$ is either a co--prime pair of
integers or $\infty$.  Thurston showed that $M_\kappa$ has a complete
hyperbolic structure for all $\kappa$ in a neighbourhood $N$ of $\infty =
(\infty;...;\infty)$ in $S^2 \times...\times S^2$, and that $\lim_{\kappa \to
  \infty} \Vol (M_\kappa) = \Vol (M)$. Moreover, there is a unique discrete and
faithful character $\chi_0 \in \PX_0(M)$ which corresponds to the complete hyperbolic 
structure on $M$ with the chosen orientation, and a neighbourhood $U(\chi_0)$ of $\chi_0$ such
that if $\kappa \in N$ and $\chi_\kappa$ is the character of the holonomy of $M_\kappa$, 
then $\chi_\kappa \in U(\chi_0)$.
  
Let $0<<p_1<p_2<...<p_k<...$ be an infinite sequence of primes, $S=\{p_i\}$,
with the property that $S' = \{(1,q_1;...;1,q_h) \mid q_i \in S\} \subset N$.
In particular, $M_\kappa$ is a closed hyperbolic 3--manifold for each $\kappa
\in S'$. Let $W = \{ \chi_\kappa \mid \kappa \in S'\}$.

We claim that $W$ is a Zariski dense subset of $\PX_0$. Choose a lift $\tilde{\chi}_0$ of $\chi_0$ in the $\SL$--character variety and corresponding lifts $\X_0$ of $\PX_0,$ $\widetilde{U}$ of $U(\chi_0)$ and $\widetilde{W}\subset \widetilde{U}$ of $W$. Since the quotient map $\X\to\PX$ is finite--to--one, it suffices to show that $\widetilde{W}$ is a Zariski dense
subset of $\X_0$.

Given $\gamma\in\pi_1(M)$, there is a rational function $I_{\gamma}:\X_0\to\C$
defined by $\chi\to\chi(\gamma)$. Thurston shows in \cite{t}, \S5.8,
that the map $I :\X_0 \to \C^h$ defined by $I(\chi_{\rho}) =
(I_{\gamma_1},...,I_{\gamma_h})$ maps a small open neighbourhood of $\chi_0$
to an open neighbourhood of $(\epsilon_1 2,...,\epsilon_h 2)$, where
$I_{\gamma_{i}}(\tilde{\chi}_{0}) = \epsilon_i 2$ and $\gamma_i$ is a fixed
primitive element of $H_1(T_i)$.  This is only possible if the functions
$I_{\gamma_{i}}$ are algebraically independent over $\C$ as elements of
$\C[\X_{0}]$.

Assume that $\widetilde{W}$ is not Zariski dense in $\X_0$. We proceed by
complete induction on the number of cusps. If $h=1$, then $\widetilde{W}$ is a finite
collection of points. This is not possible since $\widetilde{W}$ contains the
holonomy characters of infinitely many pairwise non--isometric closed hyperbolic
manifolds because their volumes tend to the volume of $M$.

So assume that $\widetilde{W}$ is a Zariski dense subset of $\X_0(N)$ whenever
$N$ has fewer than $h$ cusps, and that the hypothesis fails for $M$, which has $h$
cusps. Thus, $\widetilde{W}$ is contained in a finite union of proper irreducible subvarieties
$\cup U_i \subset \X_0,$ where $\dim U_i \le h-1$. Performing surgery on
one cusp, say the first, gives an infinite family of complete hyperbolic
$(h-1)$--cusped manifolds $M_j= M(1, p_j;\infty,...;\infty)$. For each $M_j$, surgeries
on the remaining cusps with resulting coefficients contained in $S'$ give a Zariski
dense set in $X_0(M_j)$ by the hypothesis; whence each $X_0(M_j)$ must equal
some $U_i$, and in particular, infinitely many of these Dehn surgery components
are identical. By passing to a subsequence and renumbering, we may assume
that $U_1 = X_0(M_j)$ for each $M_j$. The discrete and faithful character of $M_j$ is contained in a finite set determined by the intersection of $U_1$
with the hypersurfaces $I_{\gamma_i}^2 = 4$, $i=2,...,h$. Thus, infinitely many
of the holonomy representations are conjugate, which is again not possible
since the surgery coefficients tend to $(\infty; ...; \infty)$, and hence
$\lim \Vol(M_j) = \Vol(M)$. This proves that $\widetilde{W}$ is a Zariski dense subset of $\X_0(M)$.

The quotient map $\qe: \X_0\to\PX_0$ and the restriction $r : \PX_0 \to
\PX(\partial M)$ are finite--to--one, and therefore the set $r(W)$ is a
Zariski dense subset of $r(\PX_0)$.
\begin{itemize}
\item[$\blacktriangleright$] Whence the set $Z=r(W)\setminus p(U),$ where $U$ is the subvariety from Proposition~\ref{exactn-volume-form}, is also Zariski dense in $r(\PX_0).$
\item[$\blacktriangleright$] Moreover, $Z$ has the property that if $z
\in Z$, then there is a character $\chi\in r^{-1}(z)$ which is the character
of a holonomy of a closed hyperbolic manifold obtained by Dehn filling on $M$.
\item[$\blacktriangleright$] In particular, this character is unique (up to complex conjugation) by Mostow rigidity.
\end{itemize}


\section{Proofs of the main results}
\label{sec:Proofs}

\begin{proof}[Proof of Theorem \ref{thm:birational}]
Assume that $M$ has $h$ cusps. It suffices to show that each point in the set $Z \subset r(\PX_0)$ of the previous section has exactly one preimage. By construction, for each $z \in Z$, there is a closed hyperbolic 3--manifold $N=M(\gamma_1,...,\gamma_h)$ obtained by Dehn filling on $M$ and a preimage $\chi \in r^{-1}(z)$ such that $\chi$ is the character of a holonomy for the complete hyperbolic structure on $N$.  In particular, we have $\Vol_{M}(\chi)=\Vol_{N}(\chi) = \pm \Vol(N)$ by Lemma~\ref{volume-factors}.  
  
Now assume that $\chi' \in r^{-1}(z)$ is another preimage. Proposition~\ref{exactn-volume-form}, yields $\Vol_{M}(\chi) = \Vol_{M}(\chi')$. We claim that $\chi'$ also factors through $\pi_1(N)$. The characters of peripheral elements are completely determined by $z$.  Thus, each peripheral subgroup has a non--trivial rotation in its image and the curves $\gamma_i$ are represented by parabolics.  Since a peripheral subgroup is abelian, the images of the $\gamma_i$ must be trivial, and hence $\chi'$ factors through $\pi_1(N)$.  Lemma \ref{volume-factors} may now be applied:
\begin{equation*}
     \Vol_{N}(\chi') = \Vol_{M}(\chi') = \Vol_{M}(\chi)=\Vol_{N}(\chi) = \pm \Vol(N).
\end{equation*}
Thus by Theorem~\ref{volume-complete}, $\chi'$ is a discrete and faithful character corresponding to the complete structure on $N$, and hence by Mostow Rigidity either $\chi'=\chi$ or $\chi'=\overline{\chi}$. Since complex conjugation reverses orientation and hence changes the sign of the volume function, we have $\Vol_{N}(\chi') = -\Vol_{N}(\chi)$ in the second case, which implies an offending statement: $\Vol(N)=0$.
\end{proof}

\begin{proof}[Proof of Corollary \ref{thm:birational}]
The proof of Dunfield, \cite{du2} Corollary 3.2, applies almost verbatim. A representation $\rho\co \pi_1(M)\to \PSL$ has $|H^1(M, \Z_2)|$ pairwise distinct lifts to $\SL.$ Now $H_1(\partial M, \Z_2) \cong \Z_2^{2h}$ and $\text{im}(H_1(\partial M, \Z_2) \to H_1(M, \Z_2)) \cong \Z_2^h.$ So by duality, if $|H^1(M, \Z_2)| = 2^{h+k},$ then the image of these $2^{h+k}$ representations consists (generically) of exactly $2^h$ points in $\X(\partial M).$ The genericity hypothesis applies to $\rho_0.$ Since distinct lifts of $\rho_0$ may be on distinct components of the $\SL$--character variety, the degree of $r\co\X_0 \to \X(\partial M)$ is \emph{at most} $2^k.$
\end{proof}



\address{Mathematics, Statistics, and Computer Science, University of Illinois at Chicago, Chicago, IL 60607-7045, USA}
\email{bklaff1@uic.edu}

\address{School of Mathematics and Statistics, The University of Sydney, NSW 2006, Australia} 
\email{tillmann@maths.usyd.edu.au}
\Addresses


\end{document}